\documentclass[12pt,reqno]{amsart}
\usepackage{amssymb, amsmath,amsthm}
\usepackage{color}
\usepackage{cite}
\newtheorem{thm}{Theorem}

\newtheorem{Lemma}{Lemma}

\numberwithin{defn}{section}
\numberwithin{thm}{section}
\numberwithin{Lemma}{section}
\numberwithin{Corollary}{section}
\numberwithin{Example}{section}
\numberwithin{subsection}{section}
\numberwithin{Remark}{section}
\numberwithin{equation}{section}
\numberwithin{ppn}{section}
\begin{document}
\title
[Broadening the convergence domain of seventh order...]
{Broadening the convergence domain of Seventh-order method satisfying Lipschitz and H\"{o}lder conditions } 
\author{Akanksha Saxena, J. P. Jaiswal, K. R. Pardasani}
\date{}
\maketitle

\textbf{Abstract.} 
In this paper, the local convergence analysis of the multi-step seventh order method is presented for solving nonlinear equations assuming that the first-order  Fr\'echet derivative belongs to the Lipschitz class. The significance of our work is that it avoids the standard practice of Taylor expansion thereby, extends the applicability of the scheme by applying the technique based on the first-order derivative only. Also, this study provides radii of balls of convergence, the error bounds in terms of distances in addition to the uniqueness of the solution. Furthermore, generalization of this analysis satisfying H\"{o}lder continuity condition is provided since it is more relaxed than Lipschitz continuity condition. We have considered some numerical examples and computed the radii of the convergence balls.  
\\\\
\textbf{Mathematics Subject Classification (MSC2020).} 
65H10, 65J15, 65G99, 47J25.
\\\\
\textbf{Keywords and Phrases.} 
Nonlinear equation, Banach space, local convergence, Lipschitz continuity condition, H\"{o}lder continuity condition.


\section{\bf Introduction}
One of the most crucial problems of numerical analysis concerns with finding efficiently and precisely the approximate locally unique solution $x^*$ of the nonlinear equation having the form of
\begin{equation}\label{eqn:11}
T(x)=0,
\end{equation}
where $T$ is defined on a convex open subset $D$ of a Banach space $X$ with values in a Banach space $Y$. Many problems in various fields of computational science, engineering and other disciplines involves optimization, differential equations, integral equations, radiative transfer theory, can be brought in a form like $(\ref{eqn:11})$ using mathematical modeling. Analytical methods of solving such type of problems are very scarce or almost non existent. Therefore, many researchers only rely on iterative methods and they have proposed a plethora of iterative methods.

The practice of numerical functional analysis for finding such solutions are widely and substantially connected to Newton-like methods which is defined as follows:
\begin{equation}
x_{n+1}=x_n-[T'(x_n)]^{-1}T(x_n)\ \ \ n\ge 0,
\end{equation}
is frequently used by various researchers as it has quadratic convergence (can be see in the ref.  \cite{Traub}). The other properties of Newton's method are established in the article \cite{Kantorovich}. Moreover, in some applications involving stiff systems, high-order methods are useful. Therefore, it is important to study high-order methods.
 
The local convergence analysis of iterative procedures is based on the information around a solution, to find estimates of the radii of the convergence balls. There exists many studies which deals with the local and the semilocal convergence analysis of Newton-like methods. In the last decades, many iterative methods of increasing order of convergence have been developed and have shown their efficiency, in numerical terms, like third order  [\cite{Argyros2}, \cite{Argyros1}], fourth order \cite{Argyros3} and fifth order [\cite{Cordero}, \cite{Gupta}] etc.

 Another issue of equal importance is obtaining the radius of convergence ball as well as developing a theory to extend the convergence domain. 
Numerous authors have studied the local convergence analysis using Taylor's series but don't obtain the radii of convergence ball for the solution which can be seen in the refs. [\cite{Behl},\cite{Sharma}]. In terms of computing cost, it has a number of drawbacks. 
These types of techniques are discussed by many authors, for example, one can see the refs. [\cite{Traub}, \cite{Rall}]. 
Using considered approach, it is possible to compute the convergence radii and the error  $\|x_n-x^*\|$ estimates. 

However, lower-order classical methods have been usually re-examined for only trying to increase its order of convergence.
In particular, Sharma and Gupta  \cite{Janak} constructed three steps method of order five, defined as follows:
{\small
\begin{eqnarray}\label{eqn:12}
y_n&=&x_n-\frac{1}{2}\Gamma_nT(x_n),\nonumber\\
z_n&=&x_n-[T'(y_n)]^{-1}T(x_n),\nonumber\\
x_{n+1}&=&z_n-[2[T'(y_n)]^{-1}-\Gamma_n]T(z_n),
\end{eqnarray}}
where, $\Gamma_n=[T'(x_n)]^{-1}$.
The local convergence of above multi-step Homeier's-like method has been studied by Panday and Jaiswal \cite{Bhavna} with the help of Lipschitz and H$\ddot{o}$lder continuity conditions. 
In the extension of three-step Homeier's method, Xiao and Yin  \cite{Xiao} developed a fourth-step seventh order convergent method, which is given as:
{\small
\begin{eqnarray}\label{eqn:13}
y_n&=&x_n-\frac{1}{2}\Gamma_nT(x_n),\nonumber\\
z_n^{(1)}&=&x_n-[T'(y_n)]^{-1}T(x_n),\nonumber\\
z_{n}^{(2)}&=&z_n^{(1)}-[2[T'(y_n)]^{-1}-\Gamma_n]T(z_n^{(1)}),\nonumber\\
x_{n+1}&=&z_n^{(2)}-[2[T'(y_n)]^{-1}-\Gamma_n]T(z_n^{(2)}).
\end{eqnarray}}
This method requires the evaluation of three function, two first order derivatives and two matrix inversions per iteration.
In this article, we will present the answers with the range of initial guess $x^*$ that tell us how close the initial guess should be required for granted convergence of the method $(\ref{eqn:13})$.

In finding out the existence of the solution, we require the computation of the first-order Fr\'echet derivative. However, the analysis of convergence is established by applying the Taylor series approach based on higher-order derivatives. These techniques, which need the higher-order derivatives, limit the applicability of the algorithm. As an motivational illustration\cite{Parhi}, define a function $f$ on $D=[-\frac{1}{2},\frac{5}{2}]$ by
{\small
\begin{eqnarray}\label{eqn:14}
f(x)=
\begin{cases}
      x^3log(x^2)+x^5-x^4, & \text{if \ $x\neq0$}\\
      0, & \text{if \ $x=0$}. 
\end{cases}
\end{eqnarray}}
It is clearly notable that $f'''$ is unbounded on $D$. Hence, the theory based on higher-order derivatives \cite{Noor} fail to solve the above problem. Also, one get no idea about the domain of convergence \cite{Parhi}. The local convergence study gives valuable information regarding the radius of convergence ball.

In this presented paper, motivated by the foregoing observations and ongoing work in this area, we discuss the local convergence for the method $(\ref{eqn:13})$, by following the approach based on $f'$ to stay away from the evaluation of higher-order  Fr\'echet derivatives. Thereby, we enlarge the utility of the method $(\ref{eqn:13})$ by using hypotheses only on the first-order derivative of the function $T$ and Lipschitz and H\"{o}lder continuity conditions for which earlier studies can not be used due to the computation of higher-order Fr\'echet derivatives.

The arrangement of the whole text is as follows: the "Local convergence analysis of the method $(\ref{eqn:13})$ under Lipschitz condition" section deals with the local convergence results of the method $(\ref{eqn:13})$. Similarly, the "Local convergence analysis of the method $(\ref{eqn:13})$ under H\"{o}lder condition" section deals with the local convergence results of the method $(\ref{eqn:13})$ followed by the "Applications with Numerical Examples" section. The "Conclusions" section is placed in the last section. 

\section{\bf Local convergence analysis of the method $(\ref{eqn:13})$ under Lipschitz condition}
In this section, we describe the the local convergence analysis of the method $(\ref{eqn:13})$ which is centered on some parameters and scalar functions. Considering $\psi_0>0$ and $\psi>0$ be two parameters with $\psi_0\leq \psi$, we define the functions  $\eta_1,\eta_2,\eta_3,\eta_4,p,H_1,H_2,H_3$ and $H_4$ on interval $[0,\frac{1}{\psi_0})$ by
{\small
\begin{eqnarray}\label{eqn:32}
\eta_4(a)&=&{\bigg[}\frac{\psi\eta_3(a)a}{2(1-\psi_0\eta_3(a)a)}\nonumber\\
&&+\frac{1}{1-\psi_0\eta_3(a)a}\psi[\eta_1(a)+\eta_3(a)]\times
\left(\frac{1}{1-p(a)}\bigg[1+\frac{\psi_0}{2}\eta_3(a)a\bigg]a\right)\nonumber\\
&&+\left(\frac{1}{1-\psi_0a}.\frac{\psi[a+\eta_1(a)a]}{1-p(a)}\times \bigg[1+\frac{\psi_0}{2}\eta_3(a)a\bigg]\right){\bigg]}\eta_3(a),
\end{eqnarray}}
where
\begin{eqnarray}\label{eqn:33}
\eta_1(a)&=&\frac{1}{1-\psi_0a}{\bigg[}\frac{\psi a}{2}+\frac{1+(\psi_0/2)a}{2}{\bigg]},
\end{eqnarray}
\begin{eqnarray}\label{eqn:34}
\eta_2(a)&=&\frac{1}{1-\psi_0a}\bigg[\frac{\psi a}{2}+\frac{\psi[1+\eta_1(a)][\psi_0/2a+1]a}{1-p(a)}\bigg],
\end{eqnarray}
{\small
\begin{eqnarray}\label{eqn:32a}
\eta_3(a)&=&{\bigg[}\frac{\psi\eta_2(a)a}{2(1-\psi_0\eta_2(a)a)}\nonumber\\
&&+\frac{1}{1-\psi_0\eta_2(a)a}\psi[\eta_1(a)+\eta_2(a)]\times
\left(\frac{1}{1-p(a)}\bigg[1+\frac{\psi_0}{2}\eta_2(a)a\bigg]a\right)\nonumber\\
&&+\left(\frac{1}{1-\psi_0a}.\frac{\psi[a+\eta_1(a)a]}{1-p(a)}\times \bigg[1+\frac{\psi_0}{2}\eta_2(a)a\bigg]\right){\bigg]}\eta_2(a),
\end{eqnarray}}
and
\begin{eqnarray}\label{eqn:35}
p(a)=\psi_0\eta_1(a)a.
\end{eqnarray}
Let
{\small
\begin{eqnarray}\label{eqn:36}
H_1(a)=\eta_1(a)-1,\ \ \ H_2(a)=\eta_2(a)-1,\\
H_3(a)=\eta_3(a)-1,\ \ \ H_4(a)=\eta_4(a)-1, 
\end{eqnarray}}
and 
\begin{equation}\label{eqn:31}
\rho_1=\frac{2}{2\psi+5\psi_0}<\frac{1}{\psi_0}.
\end{equation}
We observe that $\eta_1(\rho_1)=1$ and can attain 
\begin{equation}\label{eqn:31a}
0\leq \eta_1(a)<1\ for\ a\in [0,\rho_1),\ \ \ 0<\rho_1<1/\psi_0.
\end{equation}
Now, we have that $H_1(0)=H_2(0)=H_3(0)=H_4(0)<0$ and $H_1(a)\rightarrow+\infty$ or a positive constant as $t\rightarrow \frac{1}{\psi_0}^-$.The intermediate value theorem confirms the existence of the zeros of the function $H_2(a)$ in the interval $(0,\frac{1}{\psi_0})$. We denote the smallest zero of $H_2(a)$ as $\rho_2$. Also, $h_2(\rho_1)>0$ and $\rho_1<\frac{1}{\psi_0}$, which follows from that 
\begin{equation}\label{eqn:37}
0<\rho_2<\rho_1,\ \ \ 0<\eta_2(a)<1, for\ a\in [0,\rho_2).
\end{equation}
Following this procedure, there comes the existence of zeros of functions $H_i,\ i=1,2,3,4$ in the interval $(0,\rho_0)$. Denote by $\rho_i$, respectively, the smallest solution of functions $H_i's$. Define the radius of convergence $\rho$ and for each $a\in [0, \rho)$ by
\begin{eqnarray}\label{eqn:39}
\rho=min\{\rho_i\}, \ i=1,2,3,4 ;\ \ \ 0\leq \eta_i(a)<1.
\end{eqnarray}
Let $B(x^*,\rho),\ \overline{B(x^*,\rho)}$ stand, respectively for the open and closed ball in $X$ such that $x^*\in X$ and of radius $\rho>0$. Next, we present the local convergence analysis of method $(\ref{eqn:13})$ using the preceding notations and Lipschitz conditions.

\begin{thm}\label{thm:31}
Suppose that $T:D\subset X\rightarrow Y$ be a continuously first order Fr\'echet differentiable operator. Presume $\psi_0>0$ and $\psi>0$ be given parameters. Assume that there exists $x^*\in D$ for all $x,y\in D$ and fulfill the below conditions:
\begin{eqnarray}\label{eqn:311}
T(x^*)=0,\ [T'(x^*)]^{-1}\in L(Y,X),
\end {eqnarray}
where, $L(X,Y)$ is the set of bounded linear operators from $X$ to $Y$,
\begin{eqnarray}\label{eqn:312}
\|[T'(x^*)]^{-1}(T'(x)-T'(x^*))\|&\le& \psi_0\|x-x^*\|.
\end{eqnarray}
\begin{eqnarray}\label{eqn:313}
\|[T'(x^*)]^{-1}(T'(x)-T'(y))\|&\le& \psi\|x-y\|,\\
B(x^*,\rho)&\subseteq& D,
\end{eqnarray}
where  $\rho$ is defined by equation  $(\ref{eqn:39})$. Then, the sequence $\{x_n\}$ generated by the method $(\ref{eqn:13})$ for $x_0\in B(x^*,\rho)\backslash\{x^*\}$ is well defined in $B(x^*,\rho)$ remains in $B(x^*,\rho)$ for each $n=0,1,2,\cdots$ and converges to $x^*$. Consequently, the following relations holds for $n=0,1,2,\cdots$:
\begin{eqnarray}\label{eqn:316}
\|y_n-x^*\|\le \eta_1(\|x_n-x^*\|)\|x_n-x^*\|\le\|x_n-x^*\|<\rho,
\end{eqnarray}
\begin{eqnarray}\label{eqn:317}
\|z_n^{(1)}-x^*\|\le \eta_2(\|x_n-x^*\|)\|x_n-x^*\|\le\|x_n-x^*\|<\rho,
\end{eqnarray}
\begin{eqnarray}\label{eqn:317a}
\|z_n^{(2)}-x^*\|\le \eta_3(\|x_n-x^*\|)\|x_n-x^*\|\le\|x_n-x^*\|<\rho,
\end{eqnarray}
and
\begin{eqnarray}\label{eqn:318}
\|x_{n+1}-x^*\|\le \eta_4(\|x_n-x^*\|)\|x_n-x^*\|\le\|x_n-x^*\|<\rho,
\end{eqnarray}
where the functions $\eta_i, i=1,2,3,4$ are defined by the expressions $(\ref{eqn:32})$ -$(\ref{eqn:32a})$. Moreover, if there exists $\varrho\in [\rho,\frac{1}{\psi_0})$ such that $\overline {B(x^*,\varrho)}\subseteq D$. Then the limit point $x^*$ is the only solution of equation $T(x)=0$ in $D_1=D\cap \overline {B(x^*,\varrho)}$.
\end{thm}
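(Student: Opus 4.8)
The plan is to run a local-convergence induction that uses only the hypotheses \eqref{eqn:312}--\eqref{eqn:313} and never a Taylor expansion, resting on two analytic tools. The first is the Banach perturbation lemma: for any $u\in B(x^*,\rho)$ the center-Lipschitz bound \eqref{eqn:312} gives $\|[T'(x^*)]^{-1}(T'(u)-T'(x^*))\|\le\psi_0\|u-x^*\|<\psi_0\rho<1$, so $T'(u)$ is invertible with $\|[T'(u)]^{-1}T'(x^*)\|\le(1-\psi_0\|u-x^*\|)^{-1}$. Applied to $u=x_n$ this controls $\Gamma_n$; once $\|y_n-x^*\|\le\eta_1(a)a<\rho$ (with $a=\|x_n-x^*\|$) is known, the same lemma controls $[T'(y_n)]^{-1}$ through $p(a)=\psi_0\eta_1(a)a$ from \eqref{eqn:35}, and likewise $[T'(z_n^{(1)})]^{-1}$, $[T'(z_n^{(2)})]^{-1}$ through $\psi_0\eta_2(a)a$ and $\psi_0\eta_3(a)a$, all of which stay below $1$ because $\eta_i(a)a<a<\rho<1/\psi_0$. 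The second tool is the identity $T(u)=\int_0^1 T'(x^*+\theta(u-x^*))\,d\theta\,(u-x^*)$, from which \eqref{eqn:312} yields the recurring estimate $\|[T'(x^*)]^{-1}T(u)\|\le(1+\tfrac{\psi_0}{2}\|u-x^*\|)\|u-x^*\|$. I would then show inductively that each iterate remains in $B(x^*,\rho)$.

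For the first two substeps I would write each error as a perturbed Newton step. Splitting $y_n-x^*=\big(x_n-x^*-\Gamma_nT(x_n)\big)+\tfrac12\Gamma_nT(x_n)$ and using $T'(x_n)(x_n-x^*)-T(x_n)=\int_0^1\big(T'(x_n)-T'(x^*+\theta(x_n-x^*))\big)\,d\theta\,(x_n-x^*)$ together with \eqref{eqn:313} reproduces exactly the two bracketed terms of $\eta_1$ in \eqref{eqn:33}, giving \eqref{eqn:316}. For $z_n^{(1)}$ I would write $z_n^{(1)}-x^*=\big(x_n-x^*-\Gamma_nT(x_n)\big)+\big(\Gamma_n-[T'(y_n)]^{-1}\big)T(x_n)$ and expand the difference through the resolvent identity $A^{-1}-B^{-1}=A^{-1}(B-A)B^{-1}$, i.e. $\Gamma_n-[T'(y_n)]^{-1}=\Gamma_n\big(T'(y_n)-T'(x_n)\big)[T'(y_n)]^{-1}$; bounding $\|y_n-x_n\|\le(1+\eta_1(a))a$ and the four factors by the quantities above assembles $\eta_2$ in \eqref{eqn:34}, giving \eqref{eqn:317}.

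The heart of the argument, and where I expect the bookkeeping to be heaviest, is the two outer substeps, which employ the composite operator $2[T'(y_n)]^{-1}-\Gamma_n$ in place of a genuine Jacobian inverse. For $z_n^{(2)}$ I would add and subtract the true Newton step at $z_n^{(1)}$, writing
\[
z_n^{(2)}-x^*=\big(z_n^{(1)}-x^*-[T'(z_n^{(1)})]^{-1}T(z_n^{(1)})\big)+\Big([T'(z_n^{(1)})]^{-1}-2[T'(y_n)]^{-1}+\Gamma_n\Big)T(z_n^{(1)}),
\]
and then split the error operator as $\big([T'(z_n^{(1)})]^{-1}-[T'(y_n)]^{-1}\big)+\big(\Gamma_n-[T'(y_n)]^{-1}\big)$, applying the resolvent identity to each piece. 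The three resulting contributions, controlled respectively by the invertibility factors $(1-\psi_0\eta_2(a)a)^{-1}$, $(1-p(a))^{-1}$, $(1-\psi_0 a)^{-1}$, the Lipschitz increments $\psi(\eta_1(a)+\eta_2(a))a$ and $\psi(1+\eta_1(a))a$, and the function estimate $(1+\tfrac{\psi_0}{2}\eta_2(a)a)\eta_2(a)a$, match the three bracketed terms of $\eta_3$ in \eqref{eqn:32a}, giving \eqref{eqn:317a}. Repeating the identical decomposition with $z_n^{(2)}$ in place of $z_n^{(1)}$ and $\eta_3$ in place of $\eta_2$ produces $\eta_4$ of \eqref{eqn:32} and hence \eqref{eqn:318}; the challenge is purely the disciplined tracking of these nested factors, not any new idea.

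Finally, since $\eta_4(a)<1$ on $[0,\rho)$ by the definition \eqref{eqn:39} of $\rho$, the bound \eqref{eqn:318} forces $\|x_{n+1}-x^*\|<\|x_n-x^*\|<\rho$, so the iterates remain in the ball and, $\eta_4$ being increasing, $\|x_n-x^*\|\le c^n\|x_0-x^*\|\to0$ with $c=\eta_4(\|x_0-x^*\|)<1$. For uniqueness I would take a second zero $y^*\in D_1=D\cap\overline{B(x^*,\varrho)}$, set $Q=\int_0^1 T'(x^*+\theta(y^*-x^*))\,d\theta$, and use $0=T(y^*)-T(x^*)=Q(y^*-x^*)$; the center-Lipschitz estimate gives $\|[T'(x^*)]^{-1}(Q-T'(x^*))\|\le\tfrac{\psi_0}{2}\|y^*-x^*\|\le\tfrac{\psi_0}{2}\varrho<1$, so $Q$ is invertible by the Banach lemma and $y^*=x^*$.
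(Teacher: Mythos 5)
Your proposal is correct and follows essentially the same route as the paper: the same Banach-lemma invertibility estimates, the same splitting of $y_n-x^*$ into a Newton error plus $\tfrac12\Gamma_nT(x_n)$, the same resolvent-identity decomposition $\Gamma_n-[T'(y_n)]^{-1}=\Gamma_n(T'(y_n)-T'(x_n))[T'(y_n)]^{-1}$ for the second substep, the identical add-and-subtract of the true Newton step at $z_n^{(1)}$ (resp.\ $z_n^{(2)}$) with the split $([T'(z_n^{(1)})]^{-1}-[T'(y_n)]^{-1})+(\Gamma_n-[T'(y_n)]^{-1})$ for the outer substeps, and the same integral-mean-value uniqueness argument. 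No substantive differences to report.
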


\begin{proof}
We shall show by mathematical induction that sequence $\{x_n\}$ is well defined and converges to $x^*$. Using the hypotheses, $x_0\in B(x^*,\rho)\backslash\{x^*\}$, equation $(\ref{eqn:31})$ and inequality $(\ref{eqn:312})$, we have 
\begin{eqnarray}
\|[T'(x^*)]^{-1}(T'(x_0)-T'(x^*))\|\le \psi_0\|x_0-x^*\|\le \psi_0(\rho)<1.
\end{eqnarray}
It follows from the above and the Banach lemma on invertible operator \cite{Argyros} that $[T'(x_0)]^{-1}\in L(Y,X)$
 or $T'(x_0)$ is invertible and
\begin{eqnarray}
\|[T'(x_0)]^{-1}T'(x^*)\|\le\frac{1}{1-\psi_0\|x_0-x^*\|}.
\end{eqnarray}
Now, $y_0$ is well defined by the first sub-step of the scheme $(\ref{eqn:13})$ and for $n=0,$
{\tiny
\begin{eqnarray}\label{eqn:321}
y_0-x^*&=&x_0-x^*-\frac{1}{2}[[T'(x_0)]^{-1}T(x_0)]\nonumber\\
& =&\frac{1}{2}[[T'(x_0)]^{-1}T(x_0)]+[T'(x_0)]^{-1}[T'(x_0)(x_0-x^*)-T(x_0)+T(x^*)]||.\nonumber\\
\end{eqnarray}}
Expanding $T(x_0)$ along $x^*$ and taking the norm, we get
{\tiny
\begin{eqnarray}\label{eqn:322a}
\|y_0-x^*\|&\le& \bigg\|\frac{1}{2}[T'(x_0)]^{-1}T'(x^*)\bigg\|\bigg[\int_0^1\|[T'(x^*]^{-1}[T'(x*+t(x_0-x^*)]dt\|\|x_0-x^*\|\bigg]\nonumber\\
&&+\|[T'(x_0)]^{-1}T'(x^*)\|\int_0^1\|[T'(x^*)]^{-1}[T'(x_0)-T'(x^*+t(x_0-x^*))]dt\|\|x_0-x^*\|.\nonumber\\
\end{eqnarray}}
Thus, we get
{\tiny
\begin{eqnarray}\label{eqn:323}
\|y_0-x^*\|&\le& \frac{1}{1-\psi_0\|x_0-x^*\|}\bigg[\frac{1}{2}{\bigg[}\frac{\psi_0}{2}\|x_0-x^*\|+1{\bigg]}\nonumber\\
&+&\frac{\psi}{2}\|x_0-x^*\|\bigg]\|x_0-x^*\|
\le\eta_1(\|x_0-x^*\|)\|x_0-x^*\|<\rho.
\end{eqnarray}}
From the inequalities $(\ref{eqn:312})$ and $(\ref{eqn:323})$, we have
{\small
\begin{eqnarray}
\|[T'(x^*)]^{-1}[T'(y_0)-T'(x^*)]\|&\le&\psi_0\|y_0-x^*\|\nonumber\\
&\le&\psi_0\eta_1(\|x_0-x^*\|)\|x_0-x^*\|\nonumber\\
&=&p(\|x_0-x^*\|)<1.
\end{eqnarray}}
Thus, by Banach lemma,
{\small
\begin{eqnarray}
\|[T'(y_0)]^{-1}T'(x^*)\|\le\frac{1}{1-p(\|x_0-x^*\|)}.
\end{eqnarray}}
From the second sub-step of the method $(\ref{eqn:13})$, we have
{\small
\begin{eqnarray}\label{eqn:326}
z_0^{(1)}-x^*&=&x_0-x^*-[T'(y_0)^{-1}T(x_0)]\nonumber\\
&=&x_0-x^*-[T'(x_0)]^{-1}T(x_0)+[T'(x_0)]^{-1}[T'(y_0)-T'(x_0)]T'(y_0)^{-1}T(x_0).\nonumber\\
\end{eqnarray}}
On taking norm of the equation $(\ref{eqn:326})$, we get
{\small
\begin{eqnarray}
\|z_0^{(1)}-x^*\|&\le&\|x_0-x^*-[T'(x_0)]^{-1}T(x_0)\|+\|[T'(x_0)]^{-1}T'(x^*)\|.\nonumber\\
&&\|[T'(x^*)]^{-1}[T'(y_0)-T'(x_0)]\|\|[T'(y_0)]^{-1}T'(x^*)\|\|[T'(x^*)]^{-1}T(x_0)\|\nonumber\\
&\le&\frac{1}{1-\psi_0\|x_0-x^*\|}\bigg[\frac{\psi}{2}\|x_0-x^*\|\nonumber\\
&+&\frac{[\psi\|y_0-x^*\|+\psi(\|x_0-x^*\|)][\frac{\psi_0}{2}\|x_0-x^*\|+1]}{1-p(\|x_0-x^*\|)}\bigg]\|x_0-x^*\|.
\end{eqnarray}}
Thus, we get
\begin{eqnarray}
\|z_0^{(1)}-x^*\|\le \eta_2(\|x_0-x^*\|)\|x_0-x^*\|\le\|x_0-x^*\|<\rho.
\end{eqnarray}
From the next sub-step of the method $(\ref{eqn:13})$, we have
{\small
\begin{eqnarray}\label{eqn:327}
&&z_0^{(2)}-x^*=z_0^{(1)}-x^*-(2[T'(y_0)]^{-1}-[T'(x_0)]^{-1})T(z_0^{(1)})\nonumber\\
&&=(z_0^{(1)}-x^*-[T'(z_0^{(1)})]^{-1}T'(z_0^{(1)}))+[T'(z_0^{(1)})]^{(-1)}T'(x^*)T'(x^*)^{-1}[T'(y_0)-T'(z_0^{(1)})]\nonumber\\
&&.[T'(y_0)]^{(-1)}T'(x^*)[T'(x^*)]^{-1}T(z_0^{(1)})+[T'(x_0)]^{(-1)}T'(x^*)[T'(x^*)]^{-1}[T'(y_0)-T'(x_0)]\nonumber\\
&&.[T'(y_0)]^{(-1)}T'(x^*)[T'(x^*)]^{-1}T(z_0^{(1)}).
\end{eqnarray}}
On expanding $T(z_0^{(1)})$ along $x^*$ and taking norm of the equation $(\ref{eqn:327})$, we get
{\tiny
\begin{eqnarray}
\|z_0^{(2)}-x^*\|&\le&\frac{1}{1-\psi_o\|z_0^{(1)}-x^*\|}\frac{\psi}{2}\|z_0^{(1)}-x^*\|.\|z_0^{(1)}-x^*\|+\frac{[\psi\|y_0-x^*\|+\psi\|z_0^{(1)}-x^*\|]}{1-\psi_0\|z_0^{(1)}-x^*\|}\nonumber\\
&\times&\|[T'(x^*)]^{-1}T(z_0^{(1)})\|+\frac{1}{1-\psi_0\|x_0-x^*\|}[\psi\|y_0-x^*\|+\psi\|x_0-x^*\|]\nonumber\\
&\times&\|[T'(x^*)]^{-1}T(z_0^{(1)})\|\nonumber\\
&\le&\frac{1}{1-\psi_0\|z_0^{(1)}-x^*\|}\frac{\psi}{2}\|z_0^{(1)}-x^*\|.\|z_0^{(1)}-x^*\|+\frac{[\psi\|y_0-x^*\|+\psi\|z_0^{(1)}-x^*\|]}{1-\psi_0\|z_0^{(1)}-x^*\|}\nonumber\\
&\times&\frac{1}{1-p(\|x_0-x^*\|)}\bigg(1+\frac{\psi_0}{2}\|z_0^{(1)}-x^*\|\bigg)\|z_0^{(1)}-x^*\|+\frac{[\psi\|y_0-x^*\|+\psi\|x_0-x^*\|]}{1-\psi_0\|x_0-x^*\|}\nonumber\\
&\times&\frac{1}{1-p(\|x_0-x^*\|)}\bigg(1+\frac{L_0}{2}\|z_0^{(1)}-x^*\|\bigg)\|z_0^{(1)}-x^*\|.
\end{eqnarray}}
Thus, we have
\begin{eqnarray}
\|z_0^{(2)}-x^*\|\le \eta_3(\|x_0-x^*\|)\|x_0-x^*\|<\rho.
\end{eqnarray}
Now, from the last sub-step of the method $(\ref{eqn:13})$, we have
{\small
\begin{eqnarray}\label{eqn:329}
&&x_1-x^*=z_0^{(2)}-x^*-(2[T'(y_0)]^{-1}-[T'(x_0)]^{-1})T(z_0^{(2)})\nonumber\\
&&=(z_0^{(2)}-x^*-[T'(z_0^{(2)})]^{-1}T'(z_0^{(2)}))+[T'(z_0^{(2)})]^{(-1)}T'(x^*)T'(x^*)^{-1}[T'(y_0)-T'(z_0^{(1)})]\nonumber\\
&&.[T'(y_0)]^{(-1)}T'(x^*)[T'(x^*)]^{-1}T(z_0^{(2)})+[T'(x_0)]^{(-1)}T'(x^*)[T'(x^*)]^{-1}[T'(y_0)-T'(x_0)]\nonumber\\
&&.[T'(y_0)]^{(-1)}T'(x^*)[T'(x^*)]^{-1}T(z_0^{(2)}).
\end{eqnarray}}
On expanding $T(z_0^{(2)})$ along $x^*$ and taking norm of the equation $(\ref{eqn:329})$, we get
{\tiny
\begin{eqnarray}
\|x_1-x^*\|&\le&\frac{1}{1-\psi_0\|z_0^{(2)}-x^*\|}\frac{\psi}{2}\|z_0^{(2)}-x^*\|.\|z_0^{(2)}-x^*\|+\frac{[\psi\|y_0-x^*\|+\psi\|z_0^{(2)}-x^*\|]}{1-\psi_0\|z_0^{(2)}-x^*\|}\nonumber\\
&\times&\|[T'(x^*)]^{-1}T(z_0^{(2)})\|+\frac{1}{1-\psi_0\|x_0-x^*\|}[\psi\|y_0-x^*\|+\psi\|x_0-x^*\|]\nonumber\\
&\times&\|[T'(x^*)]^{-1}T(z_0^{(2)})\|\nonumber\\
&\le&\frac{1}{1-\psi_0\|z_0^{(2)}-x^*\|}\frac{\psi}{2}\|z_0^{(2)}-x^*\|.\|z_0^{(2)}-x^*\|+\frac{[\psi\|y_0-x^*\|+\psi\|z_0^{(2)}-x^*\|]}{1-\psi_0\|z_0^{(2)}-x^*\|}\nonumber\\
&\times&\frac{1}{1-p(\|x_0-x^*\|)}\bigg(1+\frac{\psi_0}{2}\|z_0^{(2)}-x^*\|\bigg)\|z_0^{(2)}-x^*\|+\frac{[\psi\|y_0-x^*\|+\psi\|x_0-x^*\|]}{1-\psi_0\|x_0-x^*\|}\nonumber\\
&\times&\frac{1}{1-p(\|x_0-x^*\|)}\bigg(1+\frac{\psi_0}{2}\|z_0^{(2)}-x^*\|\bigg)\|z_0^{(2)}-x^*\|.
\end{eqnarray}}
Thus, we have
\begin{eqnarray}
\|x_1-x^*\|\le \eta_4(\|x_0-x^*\|)\|x_0-x^*\|<\rho,
\end{eqnarray}
which shows that for $n=0,\ x_1\in B(x^*,\rho).$ 
The function $H_4(a)=\eta_4(a)-1$ gives $H_4(0)<0$ and $H_4(\rho_3)>0$. Hence, $H_4(t)$ has at least one root in $(0,\rho_3)$. Let $\rho$ be the smallest root of $H_4(t)$ in $(0,\rho_3)$. Then, we have 
\begin{eqnarray}\label{eqn:330}
0<\rho<\rho_3<\rho_2<\rho_1<1/\psi_0.
\end{eqnarray}
and
\begin{equation}\label{eqn:331}
0<\eta_4(a)<1, for\ a\in [0,\rho).
\end{equation} 
By simply replacing $x_0, y_0, z_0^{(1)},z_0^{(2)}, x_1$ by $x_n, y_n, z_n^{(1)},z_n^{(2)}, x_{n+1}$ in the preceding estimates, we arrive at inequalities $(\ref{eqn:316})-(\ref{eqn:318})$. By the estimate, 
\begin{eqnarray}
\|x_{n+1}-x^*\|\le \eta_4(\|x_0-x^*\|)\|x_n-x^*\|<\rho.
\end{eqnarray}
We conclude that $\lim_{n\rightarrow\infty}x_n=x^*$ and $x_{n+1}\in B(x^*,\rho)$. Finally, to prove the uniqueness, let $y^*\in B(x^*,\rho)$ where $y^*\neq x^*$ with $T(y^*)=0$.
Define $F=\int_0^1T'(x^*+t(y^*-x^*))dt$. On expanding $T(y^*)$ along $x^*$ and using inequality $(\ref{eqn:312})$, we obtain
\begin{eqnarray}
\|[T'(x^*)]^{-1}\int_0^1[T'(x^*+t(y*-x^*)-T'(x^*)]dt\|\nonumber\\
\le\frac{\psi_0}{2}\|y^*-x^*\|\le\frac{\psi_0}{2}\varrho<1.
\end{eqnarray}
So, by Banach lemma, $\int_0^1[T'(x^*)]^{-1}[T'(x^*+t(y^*-x^*))]dt$  exists and invertible leading to the conclusion $x^*=y^*$, which completes the uniqueness part of the proof.
\end{proof}


\section{\bf Local convergence analysis of the method $(\ref{eqn:13})$ under H\"{o}lder condition}
In this section, we move forward to present the local convergence analysis using H\"{o}lder condition because there are numerous nonlinear equations for which the assumptions based on Lipschitz condition fails to solve without using higher-order derivatives. As an illustration, we review the following problem \cite{Argyros5}.
\begin{equation}
T(x)(s)=x(s)-\int_0^1 G(s,t)\left(x(t)^\frac{5}{2}+\frac{x(t)^2}{2}\right)dt,
\end{equation}
where  $T:C[0,1]\rightarrow C[0,1]$ and the kernel $G$ is the Green's function defined on the interval $[0,1]\times[0,1]$ by
\begin{eqnarray}
G(s,t)=
\begin{cases}(1-s)t & t\le s, \\ s(1-t),& s\le t.\end{cases} \nonumber
\end{eqnarray}
Note that, 
\begin{eqnarray*}
\bigg\|\int_0^1G(s,t)\bigg\|\le \frac{1}{8}.
\end{eqnarray*}
Then,
\begin{equation}\label{eqn:42a}
\|G'(x)-G'(y))\|\le \frac{1}{8}\left(\frac{5}{2}\|x-y\|^\frac{3}{2}+\|x-y\|\right).
\end{equation}
Clearly, it can be seen that $G'$ does not satisfy the Lipschitz continuity condition. However, $G'$ is  H\"{o}lder continuous. For such kind of examples, we also derive the local convergence results. This analysis also generalizes the local convergence analysis presented in the previous section.
Supposing $q\in (0,1]$, $\kappa_0>0$ and $\kappa>0$ be two parameters with $\kappa_0\leq \kappa$, we define the functions  $\mu_1,\mu_2,\mu_3,\mu_4,p,M_1,M_2,M_3$ and $M_4$ on interval $\left[0,\left(\frac{1}{\kappa_0}\right)^\frac{1}{q}\right)$ by
{\tiny
\begin{eqnarray}\label{eqn:42}
\mu_4(a)&=&{\bigg[}\frac{\kappa\mu_3(a)^q.a^q}{(q+1)(1-\kappa_0\mu_3(a)^q.a^q)}\nonumber\\
&&+\frac{1}{1-\kappa_0\mu_3(a^q)a^q}\kappa[\mu_1(a)^q+\mu_3(a)^q]a^q\times
\left(\frac{1}{1-p(a)}\bigg[1+\frac{\kappa_0}{q+1}\mu_3(a)^q.a^q\bigg]\right)\nonumber\\
&&+\left(\frac{1}{1-\kappa_0a^q}.\frac{\kappa[1+\mu_1(a)^q]a^q}{1-p(a)}\times \bigg[1+\frac{\kappa_0}{q+1}\mu_3(a)^q.a^q\bigg]\right){\bigg]}\mu_3(a),
\end{eqnarray}}
where
\begin{eqnarray}\label{eqn:43}
\mu_1(a)=\frac{1}{1-\kappa_0a^q}{\bigg[}\frac{\kappa a^q}{q+1}+\frac{1+\frac{\kappa_0}{(q+1)}.a^q}{2}{\bigg]},
\end{eqnarray}
\begin{eqnarray}\label{eqn:44}
\mu_2(a)=\frac{1}{1-\kappa_0a^q}\bigg[\frac{\kappa a^q}{q+1}+\frac{\kappa[1+\mu_1(a)^q][\frac{\kappa_0}{(q+1)}.a^q+1].a^q}{1-p(a)}\bigg],
\end{eqnarray}
{\tiny
\begin{eqnarray}\label{eqn:45}
\mu_3(a)&=&{\bigg[}\frac{\kappa\mu_2(a)^q.a^q}{(q+1)(1-\kappa_0\mu_2(a)^q.a^q)}\nonumber\\
&&+\frac{1}{1-\kappa_0\mu_2(a^q)a^q}\kappa[\mu_1(a)^q+\mu_2(a)^q]a^q\times
\left(\frac{1}{1-p(a)}\bigg[1+\frac{\kappa_0}{q+1}\mu_2(a)^q.a^q\bigg]\right)\nonumber\\
&&+\left(\frac{1}{1-\kappa_0a^q}.\frac{\kappa[1+\mu_1(a)^q]a^q}{1-p(a)}\times \bigg[1+\frac{\kappa_0}{q+1}\mu_2(a)^q.a^q\bigg]\right){\bigg]}\mu_2,(a),
\end{eqnarray}}
and
\begin{eqnarray}\label{eqn:46}
p(a)=\kappa_0\mu_1(a)^q a^q.
\end{eqnarray}
Let
\begin{eqnarray}\label{eqn:47}
M_1(a)=\mu_1(a)-1,\ \ \ M_2(a)=\mu_2(a)-1,\\
M_3(a)=\mu_3(a)-1,\ \ \ 
M_4(a)=\mu_4(a)-1.
\end{eqnarray}
and 
\begin{equation}\label{eqn:41}
\rho_1=\left(\frac{q+1}{2\kappa+\kappa_0(3+2q)}\right)^\frac{1}{q}<\left(\frac{1}{\kappa_0}\right)^\frac{1}{q}.
\end{equation}
We observe that $\mu_1(\rho_1)=1$ and 
\begin{equation}\label{eqn:41a}
0\leq \mu_1(a)<1\ for\ a\in [0,\rho_1),\ \ \ \ 0<\rho_1<(1/\kappa_0)^\frac{1}{q}.
\end{equation}
We denote the smallest zero of $M_2(a)$ as $\rho_2$. Also, $M_2(\rho_1)>0$ and $\rho_1<\left(\frac{1}{\kappa_0}\right)^\frac{1}{q}$, which follows from that 
\begin{equation}\label{eqn:48}
0<\rho_2<\rho_1,\ \ \ 0<\mu_2(a)<1, for\ a\in [0,\rho_2).
\end{equation} 
Following this procedure, there comes the  existence of zeros of functions $M_i,\ i=1,2,3,4$ in the interval $(0,\rho_0)$. Denote by $\rho_i$, respectively, the smallest solution of functions $M_i's$. Define the radius of convergence $\rho$ and for each $a\in [0, \rho)$ by
\begin{eqnarray}\label{eqn:410}
\rho=min\{\rho_i\}, \ i=1,2,3,4, \ \ \ 
0\leq \mu_i(a)<1,
\end{eqnarray}
 Also, we assume that there exists $x^*\in D$ for all $x,y\in D$ and fulfill the below conditions alongwith the assumption $(\ref{eqn:311})$:
\begin{eqnarray}\label{eqn:413}
\|[T'(x^*)]^{-1}(T'(x)-T'(x^*))\|&\le& \kappa_0\|x-x^*\|^q.
\end{eqnarray}
\begin{eqnarray}\label{eqn:414}
\|[T'(x^*)]^{-1}(T'(x)-T'(y))\|&\le& \kappa\|x-y\|^q,\\
B(x^*,\rho)&\subseteq& D,
\end{eqnarray}
\begin{Lemma}\label{lm:42}
If $T$ satisfies the assumptions  $(\ref{eqn:413})$ and  $(\ref{eqn:414})$, consequently the inequalities given below hold for $x\in D$, $q=(0,1]$ and $t\in [0,1]$:
\begin{eqnarray}\label{eqn:421}
\|[T'(x^*)]^{-1}T'(x)\|&\le&1+ \kappa_0\|x-x^*\|^q.
\end{eqnarray}
\begin{eqnarray}\label{eqn:422}
\|[T'(x^*)]^{-1}(T'(x^*+t(x-x^*))\|&\le&1+ \kappa_0t^q\|x-x^*\|^q.
\end{eqnarray}
\begin{eqnarray}\label{eqn:423}
\|[T'(x^*)]^{-1}T(x)\|&\le&\left(1+ \frac{\kappa_0}{q+1}\|x-x^*\|^q\right).\|x-x^*\|.
\end{eqnarray}
\end{Lemma}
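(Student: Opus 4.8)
The plan is to establish the three bounds in sequence, using only the center-H\"older hypothesis $(\ref{eqn:413})$, the convexity of $D$, and the integral form of the mean value theorem for Fr\'echet derivatives. For $(\ref{eqn:421})$ I would decompose
\begin{eqnarray*}
[T'(x^*)]^{-1}T'(x)=I+[T'(x^*)]^{-1}(T'(x)-T'(x^*)),
\end{eqnarray*}
observe that the identity operator $I$ has norm $1$, and apply the triangle inequality together with $(\ref{eqn:413})$ to the remaining summand. This yields $\|[T'(x^*)]^{-1}T'(x)\|\le 1+\kappa_0\|x-x^*\|^q$ at once.

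For $(\ref{eqn:422})$, the idea is simply to substitute the point $x^*+t(x-x^*)$ in place of $x$ in the bound just proved. Since $x^*,x\in D$ and $D$ is convex, the point $x^*+t(x-x^*)$ lies in $D$ for every $t\in[0,1]$, so $(\ref{eqn:421})$ is applicable there. Because $\|(x^*+t(x-x^*))-x^*\|=t\|x-x^*\|$ with $t\ge 0$, raising to the power $q$ produces exactly the factor $t^q$, giving the stated estimate.

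For the final inequality $(\ref{eqn:423})$, I would begin from $T(x^*)=0$ and write $T(x)=T(x)-T(x^*)=\int_0^1 T'(x^*+t(x-x^*))(x-x^*)\,dt$. Pre-multiplying by $[T'(x^*)]^{-1}$ and taking norms, I would pull the constant factor $\|x-x^*\|$ outside the integral and bound the integrand by means of $(\ref{eqn:422})$. The only computation of note is the scalar integral $\int_0^1 t^q\,dt=\frac{1}{q+1}$, which supplies the coefficient $\frac{\kappa_0}{q+1}$ appearing in the conclusion.

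I do not expect any step to present a genuine obstacle: this lemma is a foundational tool whose purpose is to furnish the elementary building blocks for the $\mu$-type recurrences used in the H\"older version of the main theorem, and it mirrors the role played implicitly by the Lipschitz estimates in the previous section. The only points requiring mild care are the invocation of convexity of $D$ in $(\ref{eqn:422})$, so that the intermediate points remain in the domain where the hypotheses hold, and the correct evaluation of the power integral in $(\ref{eqn:423})$; both are routine, and notably only the center condition $(\ref{eqn:413})$ is needed here, not the full H\"older bound $(\ref{eqn:414})$.
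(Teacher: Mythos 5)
Your proposal is correct and follows essentially the same route as the paper: the identity-plus-remainder decomposition with the triangle inequality for the first bound, substitution of the convex combination $x^*+t(x-x^*)$ for the second, and the integral representation $T(x)=\int_0^1 T'(x^*+t(x-x^*))(x-x^*)\,dt$ with $\int_0^1 t^q\,dt=\frac{1}{q+1}$ for the third. Your added observation that only the centered condition $(\ref{eqn:413})$ is used, not $(\ref{eqn:414})$, is accurate and slightly sharper than the paper's own phrasing.
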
                         
\begin{proof}
 By considering the hypothesis $(\ref{eqn:413})$, we get
\begin{eqnarray}\label{eqn:424}
\|[T'(x^*)]^{-1}T'(x)\|&\le&1+\|[T'(x^*)]^{-1}T'(x)-[T'(x^*)]^{-1}T'(x^*)\|\nonumber\\
&\le&1+ \kappa_0\|x-x^*\|^q.
\end{eqnarray}   
 In the similar manner, we can derive
 \begin{eqnarray}\label{eqn:425}
\|[T'(x^*)]^{-1}(T'(x^*+t(x-x^*))\|&\le& 1+\kappa_0\|x^*+t(x-x^*)-x^*\|^q\nonumber\\
&\le&1+ \kappa_0t^q\|x-x^*\|^q.
\end{eqnarray} 
Next,
\begin{eqnarray}\label{eqn:426}
\|[T'(x^*)]^{-1}T(x)&\le& \int_0^1[\kappa_0\|x^*+t(x-x^*)-x^*\|^q+1].\|x-x^*\|dt \nonumber\\
&\le&\left(1+ \frac{\kappa_0}{q+1}\|x-x^*\|^q\right).\|x-x^*\|.
\end{eqnarray} 
\end{proof}

\begin{thm}\label{thm:41}
Suppose that there exists $x^*\in D$ and the Fr\'echet differentiable operator $T:D\subset X\rightarrow Y$ satisfies the assumptions  $(\ref{eqn:413})$ and  $(\ref{eqn:414})$. Then, the sequence $\{x_n\}$ generated by the method $(\ref{eqn:13})$ for $x_0\in B(x^*,\rho)\backslash\{x^*\}$ is well defined in $B(x^*,\rho)$ remains in $B(x^*,\rho)$ for each $n=0,1,2,\cdots$ and converges to $x^*$. Henceforward, the following measures holds for $n=0,1,2,\cdots$:
\begin{eqnarray}\label{eqn:415}
\|y_n-x^*\|\le \mu_1(\|x_n-x^*\|)\|x_n-x^*\|\le\|x_n-x^*\|<\rho,
\end{eqnarray}
\begin{eqnarray}\label{eqn:416}
\|z_n^{(1)}-x^*\|\le \mu_2(\|x_n-x^*\|)\|x_n-x^*\|\le\|x_n-x^*\|<\rho,
\end{eqnarray}
\begin{eqnarray}\label{eqn:417}
\|z_n^{(2)}-x^*\|\le \mu_3(\|x_n-x^*\|)\|x_n-x^*\|\le\|x_n-x^*\|<\rho,
\end{eqnarray}
and
\begin{eqnarray}\label{eqn:418}
\|x_{n+1}-x^*\|\le \mu_4(\|x_n-x^*\|)\|x_n-x^*\|\le\|x_n-x^*\|<\rho,
\end{eqnarray}
where the functions $\mu_i, i=1,2,3,4$ are defined by the expressions $(\ref{eqn:42})$ - $(\ref{eqn:45})$. Moreover, if there exists $\varrho\in\left [\rho,\left(\frac{1+q}{\kappa_0}\right)^\frac{1}{q}\right]$ such that $\overline {B(x^*,\varrho)}\subseteq D$. Then the limit point $x^*$ is the only solution of equation $T(x)=0$ in $D_1=D\cap \overline {B(x^*,\varrho)}$.
\end{thm}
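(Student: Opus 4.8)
The plan is to run the same inductive scheme as in the proof of Theorem \ref{thm:31}, replacing every appeal to the Lipschitz bounds by the H\"older estimates collected in Lemma \ref{lm:42} together with condition $(\ref{eqn:414})$, and carrying along the factor $q+1$ that now appears in the integral bounds. First I would fix $x_0\in B(x^*,\rho)\setminus\{x^*\}$ and, since $\rho<(1/\kappa_0)^{1/q}$, invoke $(\ref{eqn:413})$ to get $\|[T'(x^*)]^{-1}(T'(x_0)-T'(x^*))\|\le\kappa_0\|x_0-x^*\|^q<1$. The Banach lemma then yields invertibility of $T'(x_0)$ together with $\|[T'(x_0)]^{-1}T'(x^*)\|\le(1-\kappa_0\|x_0-x^*\|^q)^{-1}$, which is the starting point for every substep.

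For the first substep I would write $y_0-x^*=[T'(x_0)]^{-1}\big[T'(x_0)(x_0-x^*)-(T(x_0)-T(x^*))\big]+\tfrac12[T'(x_0)]^{-1}T(x_0)$, insert the identity $T(x_0)-T(x^*)=\int_0^1 T'(x^*+t(x_0-x^*))(x_0-x^*)\,dt$, and bound the integrand by $(\ref{eqn:414})$ so that $\int_0^1\kappa(1-t)^q\,dt=\kappa/(q+1)$, while the remaining term is controlled through $(\ref{eqn:423})$. This produces exactly $\|y_0-x^*\|\le\mu_1(\|x_0-x^*\|)\|x_0-x^*\|$, matching the definition $(\ref{eqn:43})$. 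Next $(\ref{eqn:413})$ gives $\|[T'(x^*)]^{-1}(T'(y_0)-T'(x^*))\|\le\kappa_0\|y_0-x^*\|^q\le p(\|x_0-x^*\|)<1$, so by the Banach lemma $T'(y_0)$ is invertible with $\|[T'(y_0)]^{-1}T'(x^*)\|\le(1-p(\|x_0-x^*\|))^{-1}$, which unlocks the later substeps.

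The estimates for $z_0^{(1)}$, $z_0^{(2)}$ and $x_1$ would follow the same pattern as in Theorem \ref{thm:31}: decompose each difference by inserting factorizations of the form $[T'(\cdot)]^{-1}T'(x^*)\cdot[T'(x^*)]^{-1}(\cdots)$, estimate the divided-difference pieces by $(\ref{eqn:414})$ and the operator norms by $(\ref{eqn:421})$--$(\ref{eqn:422})$, and bound $\|[T'(x^*)]^{-1}T(z_0^{(j)})\|$ by $(\ref{eqn:423})$. Substituting the already-established bounds $\mu_1,\mu_2,\mu_3$ for the inner iterates then reproduces the defining expressions $(\ref{eqn:44})$, $(\ref{eqn:45})$ and $(\ref{eqn:42})$, giving $\|z_0^{(1)}-x^*\|\le\mu_2(\|x_0-x^*\|)\|x_0-x^*\|$, $\|z_0^{(2)}-x^*\|\le\mu_3(\|x_0-x^*\|)\|x_0-x^*\|$ and $\|x_1-x^*\|\le\mu_4(\|x_0-x^*\|)\|x_0-x^*\|$. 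Since each $\mu_i<1$ on $[0,\rho)$ by $(\ref{eqn:410})$, all iterates stay in $B(x^*,\rho)$; replacing the index $0$ by $n$ yields $(\ref{eqn:415})$--$(\ref{eqn:418})$, and the contraction $\|x_{n+1}-x^*\|\le\mu_4(\|x_0-x^*\|)\|x_n-x^*\|$ forces $x_n\to x^*$.

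Finally, for uniqueness I would set $F=\int_0^1 T'(x^*+t(y^*-x^*))\,dt$ for a second root $y^*\in D_1$ and estimate $\big\|[T'(x^*)]^{-1}\int_0^1[T'(x^*+t(y^*-x^*))-T'(x^*)]\,dt\big\|\le\int_0^1\kappa_0 t^q\|y^*-x^*\|^q\,dt=\tfrac{\kappa_0}{q+1}\|y^*-x^*\|^q\le\tfrac{\kappa_0}{q+1}\varrho^q<1$, where the strict inequality is precisely where the hypothesis $\varrho<(\tfrac{1+q}{\kappa_0})^{1/q}$ enters. The Banach lemma then makes $F$ invertible, and $0=T(y^*)-T(x^*)=F(y^*-x^*)$ forces $y^*=x^*$. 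I expect the main obstacle to be the bookkeeping in the $z_0^{(2)}$ and $x_1$ steps: the operator $2[T'(y_0)]^{-1}-[T'(x_0)]^{-1}$ must be split into the three groups appearing in $(\ref{eqn:327})$ and $(\ref{eqn:329})$ (a Newton-type residual plus two cross terms), and each group's H\"older exponent and $(q+1)$-denominator must be tracked so that the sum collapses exactly into the nested functions $\mu_3$ and $\mu_4$. Everything else is a direct transcription of the Lipschitz argument with $q=1$ relaxed to $q\in(0,1]$.
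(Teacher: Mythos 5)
Your proposal follows essentially the same route as the paper: the same Banach-lemma inversions, the same algebraic decompositions of each substep (the paper explicitly reuses the identities from the Lipschitz proof), the H\"older integral bounds with the $(q+1)$-denominators from Lemma \ref{lm:42}, and the identical uniqueness argument via $F=\int_0^1 T'(x^*+t(y^*-x^*))\,dt$. No substantive differences to report.
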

\begin{proof}
Assume that $\|x_0-x^*\|^q<\frac{1}{\kappa_0}$ and using the hypotheses, $x_0\in B(x^*,\rho)\backslash\{x^*\}$ and inequality $(\ref{eqn:413})$, we have that
\begin{eqnarray}
\|[T'(x^*)]^{-1}(T'(x_0)-T'(x^*))\|\le \kappa_0\|x_0-x^*\|^q<1.
\end{eqnarray}
It follows from the above and the Banach lemma on invertible operator \cite{Argyros} that $[T'(x_0)]^{-1}\in L(Y,X)$
 or $T'(x_0)$ is invertible and
\begin{eqnarray}\label{eqn:t41a}
\|[T'(x_0)]^{-1}T'(x^*)\|\le\frac{1}{1-\kappa_0\|x_0-x^*\|^q}.
\end{eqnarray}
Now, $y_0$ is well defined by the first sub-step of the scheme $(\ref{eqn:13})$ and for $n=0$ we get the approximation $(\ref{eqn:321})$. 
Expanding $T(x_0)$ along $x^*$ and taking the norm, we get the inequality $(\ref{eqn:322a})$. Finally, using the assumption $(\ref{eqn:413})$ and the inequality $(\ref{eqn:t41a})$, we get
{\small
\begin{eqnarray}\label{eqn:t43}
\|y_0-x^*\|&\le& \frac{1}{1-\kappa_0\|x_0-x^*\|^q}\bigg[\frac{1}{2}{\bigg[}\frac{\kappa_0}{q+1}\|x_0-x^*\|^q+1{\bigg]}\nonumber\\
&+&\frac{\kappa}{q+1}\|x_0-x^*\|^q\bigg]\|x_0-x^*\|\nonumber\\
&\le&\mu_1(\|x_0-x^*\|)\|x_0-x^*\|<\rho.
\end{eqnarray}}
From the inequalities $(\ref{eqn:413})$ and $(\ref{eqn:t43})$, we have
\begin{eqnarray}
\|[T'(x^*)]^{-1}[T'(y_0)-T'(x^*)]\|&\le&\kappa_0\|y_0-x^*\|^q\nonumber\\
&\le&\kappa_0\mu_1(\|x_0-x^*\|)^q\|x_0-x^*\|^q\nonumber\\
&=&p(\|x_0-x^*\|)<1.
\end{eqnarray}
Again, from the second sub-step of the method $(\ref{eqn:13})$, we find the approximation $(\ref{eqn:326})$. 
On taking the norm, we get
{\tiny
\begin{eqnarray}
\|z_0^{(1)}-x^*\|&\le&\|x_0-x^*-[T'(x_0)]^{-1}T(x_0)\|+\|[T'(x_0)]^{-1}T'(x^*)\|.\nonumber\\
&&\|[T'(x^*)]^{-1}[T'(y_0)-T'(x_0)]\|\|[T'(y_0)]^{-1}T'(x^*)\|\|[T'(x^*)]^{-1}T(x_0)\|\nonumber\\
&\le&\frac{1}{1-\kappa_0\|x_0-x^*\|^q}\bigg[\frac{\kappa}{q+1}\|x_0-x^*\|^q\nonumber\\
&+&\frac{[\kappa\|y_0-x^*\|^q+\kappa(\|x_0-x^*\|^q)][\frac{\kappa_0}{q+1}\|x_0-x^*\|^q+1]}{1-p(\|x_0-x^*\|)}\bigg]\|x_0-x^*\|.
\end{eqnarray}}
Thus, we get
\begin{eqnarray}
\|z_0^{(1)}-x^*\|\le \mu_2(\|x_0-x^*\|)\|x_0-x^*\|\le\|x_0-x^*\|<\rho.
\end{eqnarray}
From the next sub-step of the method $(\ref{eqn:13})$, we have the approximation $(\ref{eqn:327})$. 
On expanding $T(z_0^{(1)})$ along $x^*$ and taking norm, we get
{\tiny
\begin{eqnarray}
\|z_0^{(2)}-x^*\|&\le&\frac{1}{1-\kappa_0\|z_0^{(1)}-x^*\|^q}\frac{\kappa}{q+1}\|z_0^{(1)}-x^*\|^q.\|z_0^{(1)}-x^*\|+\frac{[\kappa\|y_0-x^*\|^q+\kappa\|z_0^{(1)}-x^*\|^q]}{1-\kappa_0\|z_0^{(1)}-x^*\|^q}\nonumber\\
&\times&\|[T'(x^*)]^{-1}T(z_0^{(1)})\|+\frac{1}{1-\kappa_0\|x_0-x^*\|^q}[\kappa\|y_0-x^*\|^q+\kappa\|x_0-x^*\|^q]\nonumber\\
&\times&\|[T'(x^*)]^{-1}T(z_0^{(1)})\|\nonumber\\
&\le&\frac{1}{1-\kappa_0\|z_0^{(1)}-x^*\|^q}\frac{\kappa}{q+1}\|z_0^{(1)}-x^*\|^q.\|z_0^{(1)}-x^*\|+\frac{[\kappa\|y_0-x^*\|^q+\kappa\|z_0^{(1)}-x^*\|^q]}{1-\kappa_0\|z_0^{(1)}-x^*\|^q}\nonumber\\
&\times&\frac{1}{1-p(\|x_0-x^*\|)}\bigg(1+\frac{\kappa_0}{q+1}\|z_0^{(1)}-x^*\|^q\bigg)\|z_0^{(1)}-x^*\|+\frac{[\kappa\|y_0-x^*\|^q+\kappa\|x_0-x^*\|^q]}{1-\kappa_0\|x_0-x^*\|^q}\nonumber\\
&\times&\frac{1}{1-p(\|x_0-x^*\|^q)}\bigg(1+\frac{\kappa_0}{q+1}\|z_0^{(1)}-x^*\|^q\bigg)\|z_0^{(1)}-x^*\|.
\end{eqnarray}}
Thus, we have
\begin{eqnarray}
\|z_0^{(2)}-x^*\|\le \kappa_3(\|x_0-x^*\|)\|x_0-x^*\|<\rho,
\end{eqnarray}
Finally, from the last sub-step of the method $(\ref{eqn:13})$ we have the approximation $(\ref{eqn:329})$. 
On expanding $T(z_0^{(2)})$ along $x^*$ and taking norm, we get
{\tiny
\begin{eqnarray}
\|x_1-x^*\|&\le&\frac{1}{1-\kappa_0\|z_0^{(2)}-x^*\|^q}\frac{\kappa}{q+1}\|z_0^{(2)}-x^*\|^q.\|z_0^{(2)}-x^*\|+\frac{[\kappa\|y_0-x^*\|^q+\kappa\|z_0^{(2)}-x^*\|^q]}{1-\kappa_0\|z_0^{(2)}-x^*\|^q}\nonumber\\
&\times&\|[T'(x^*)]^{-1}T(z_0^{(2)})\|+\frac{1}{1-\kappa_0\|x_0-x^*\|^q}[\kappa\|y_0-x^*\|^q+\kappa\|x_0-x^*\|^q]\nonumber\\
&\times&\|[T'(x^*)]^{-1}T(z_0^{(2)})\|\nonumber\\
&\le&\frac{1}{1-\kappa_0\|z_0^{(2)}-x^*\|^q}\frac{\kappa}{q+1}\|z_0^{(2)}-x^*\|^q.\|z_0^{(2)}-x^*\|+\frac{[\kappa\|y_0-x^*\|^q+\kappa\|z_0^{(2)}-x^*\|^q]}{1-\kappa_0\|z_0^{(2)}-x^*\|^q}\nonumber\\
&\times&\frac{1}{1-p(\|x_0-x^*\|)}\bigg(1+\frac{\kappa_0}{q+1}\|z_0^{(2)}-x^*\|^q\bigg)\|z_0^{(2)}-x^*\|+\frac{[\kappa\|y_0-x^*\|^q+\kappa\|x_0-x^*\|^q]}{1-\kappa_0\|x_0-x^*\|^q}\nonumber\\
&\times&\frac{1}{1-p(\|x_0-x^*\|)}\bigg(1+\frac{\kappa_0}{q+1}\|z_0^{(2)}-x^*\|^q\bigg)\|z_0^{(2)}-x^*\|.
\end{eqnarray}}
Thus, we have
\begin{eqnarray}
\|x_1-x^*\|\le \mu_4(\|x_0-x^*\|)\|x_0-x^*\|<\rho,
\end{eqnarray}
which shows that for $n=0,\ x_1\in B(x^*,\rho).$ 
The function $M_4(a)=\mu_4(a)-1$ gives $M_4(0)<0$ and $M_4(\rho_3)>0$. Hence, $M_4(t)$ has at least one root in $(0,\rho_3)$. Let $\rho$ be the smallest root of $M_4(t)$ in $(0,\rho_3)$. Then, we have 
\begin{eqnarray}\label{eqn:t47}
0<\rho<\rho_3<\rho_2<\rho_1<\left(\frac{1}{\kappa_0}\right)^\frac{1}{q}.
\end{eqnarray}
and
\begin{equation}\label{eqn:t48}
0<\mu_4(a)<1, for\ a\in [0,\rho).
\end{equation} 
By simply replacing $x_0, y_0, z_0^{(1)},z_0^{(2)}, x_1$ by $x_n, y_n, z_n^{(1)},z_n^{(2)}, x_{n+1}$ in the preceding estimates, we arrive at inequalities $(\ref{eqn:415})-(\ref{eqn:418})$. By the estimate 
\begin{eqnarray}
\|x_{n+1}-x^*\|\le \mu_4(\|x_0-x^*\|)\|x_n-x^*\|<\rho.
\end{eqnarray}
We conclude that $\lim_{n\rightarrow\infty}x_n=x^*$ and $x_{n+1}\in B(x^*,\rho)$. Finally, to prove the uniqueness, let $y^*\in B(x^*,\rho)$ where $y^*\neq x^*$ with $T(y^*)=0$.
Define $F=\int_0^1T'(x^*+t(y^*-x^*))dt$. On expanding $T(y^*)$ along $x^*$ and using inequality $(\ref{eqn:413})$, we obtain
\begin{eqnarray}
\|[T'(x^*)]^{-1}\int_0^1[T'(x^*+t(y*-x^*)-T'(x^*)]dt\|\nonumber\\
\le\frac{\kappa_0}{q+1}\|y^*-x^*\|^q\le\frac{\kappa_0}{q+1}\varrho^q<1.
\end{eqnarray}
So, by Banach lemma, $\int_0^1[T'(x^*)]^{-1}[T'(x^*+t(y^*-x^*))]dt$  exists and invertible leading to the conclusion $x^*=y^*$, which completes the uniqueness part of the proof.
\end{proof}



\section{\bf Applications with numerical examples }
In this section, two numerical examples are worked out to demonstrate the efficiency of our local convergence analysis by giving the radii of convergence for the scheme $(\ref{eqn:13})$. We obtain better results using our technique.\\\\
\textbf{Example 4.1}\cite{Bhavna} Returning back to the illustration example given in the introduction of this study of Lipschitz, 
The unique solution is $x^*=1$. The consecutive derivatives of $f$ are
\begin{eqnarray*}
f'(x)&=&3x^2logx^2+5x^4-4x^3+2x^2,\\
f''(x)&=&6xlogx^2+20x^3-12x^2+10x,\\
f'''(x)&=&6logx^2+60x^2-24x+22.
\end{eqnarray*}
It can be easily visible that $f'''$ is unbounded on $D$. Nevertheless, all the assumptions of the Theorem $(\ref{thm:31})$ for the iterative method $(\ref{eqn:13})$ are satisfied and hence applying the convergence results with $x^*=1$, we obtain $\psi_0=\psi=96.6628$. Here, we will use the iterative method $(\ref{eqn:13})$ and compare it with the scheme given by Cordero et al. \cite{Cordero1} and we denote it by CHMT. From the above Table $(\ref{tab:1})$, we can see that the radius $\rho$ of convergence computed for the method $(\ref{eqn:13})$ seems to be finer than the method CHMT which local convergence was discussed in the article \cite{Liu}. As a result, the approach being evaluated is more powerful.
\begin{table}
\centering
\caption{Comparison of convergence radius (Example 4.1)}\label{tab:1}
\begin{tabular}{|c| c| c|} \hline
$Radius$      & Method $(\ref{eqn:13})$    & CHMT$(\theta=-2)$ \\ \hline
$\rho_1$          & 0.00295578                  & 0.006689      \\ 
$\rho_2$          &  0.00246894                  &  0.005750     \\ 
$\rho_3$              &0.00217353                &0.003001     \\
$\rho_4$             &0.00208131                   &0.001943     \\
$\rho$                & 0.00208131                  & 0.001943     \\
\hline
\end{tabular}
\end{table}

\textbf {Example 4.2}\cite{Legaz} In order to show the applicability of the results presented in this paper, we consider the following Planck's radiation law problem \cite{Jain}:
\begin{eqnarray*}
\phi(\lambda)=\frac{8\pi c P\lambda^{-5}}{e^{\frac{c P}{\lambda BT}}-1},
\end{eqnarray*}
which calculates the energy density within an isothermal blackbody, where 
\begin{itemize}
\item $\lambda$ is the wavelength of the radiation
\item $T$ is the absolute temperature of the blackbody
\item $B$ is Boltzmann's constant 
\item $P$ is the Planck's constant
\item $c$ is the speed of light.
\end{itemize}
Suppose, we would like to determine wavelength $\lambda$ which corresponds to maximum energy density $\phi(\lambda)$. Therefore, solving for maxima we define 
\begin{eqnarray*}
f(x)=e^{-x}-1+\frac{x}{5}.
\end{eqnarray*}
We have $x^*=4.965114$ and $F'(x^*)=0.193023$. Then, on using assumptions $(\ref{eqn:413})-(\ref{eqn:414})$ we have that, $q=1$, $\kappa_0=0.0608658<\kappa=0.094888.$ Table $(\ref{tab:5})$ displays the radius $\rho$ of convergence by the discussed method $(\ref{eqn:13})$ along with the existing scheme KFS analyzed in the reference \cite{Legaz}. We discovered that when compared the provided method enlarges the radius of the convergence ball.
\begin{table}
\centering
\caption{Comparison of convergence radius (Example 4.2)}\label{tab:5}
\begin{tabular}{|c| c| c|} \hline
$Radius$      & Method $(\ref{eqn:13})$     & KFS \\ \hline
$\rho_1$       & 4.04772                       & 9.23282       \\ 
$\rho_2$       &2.99797                       &2.40532 \\ 
$\rho_3$        & 2.58569                     & 1.11454       \\
$\rho_4$        &2.45972                       & \\
$\rho$            &2.45972                       &1.11454        \\
\hline
\end{tabular}
\end{table}

%
\textbf {Example 4.3}\cite{Argyros5} Returning back to the illustration example given in the introduction of this study of H\"{o}lder, in view of $(\ref{eqn:42a})$, the previous results, which required $G^{(4)}$ to be bounded, are no longer valid. However, our findings are applicable and therefore our study, on the other hand, is valuable in ensuring that these techniques for this problem are converging.
Here, $q=1$ and $\kappa_0=\kappa=\frac{1}{8}\left(\frac{5}{2}\sqrt{2}+1\right)$. Indeed, using the above choices of functions $\kappa,\ \kappa_0$ and expression $(\ref{eqn:41})$,  the radius $\rho$ of convergence is computed in Table $(\ref{tab:6})$. 
\begin{table}
\centering
\caption{Convergence radius for (Example 4.3)}\label{tab:6}
\begin{tabular}{|c| c| } \hline
$Radius$      & Method $(\ref{eqn:13})$\\ \hline
$\rho_1$          & 0.503957                        \\ 
$\rho_2$          &0.420951              \\ 
$\rho_3$              & 0.378541                   \\
$\rho_4$             &0.363397      \\
$\rho$                &0.363397                \\
\hline
\end{tabular}
\end{table}
Thus, we guarantee the convergence of the method $(\ref{eqn:13})$ with radius $\rho=0.363397$.


\section{Conclusions}
In the presented work, we have analyzed the local convergence analysis of the efficient seventh order method for solving the nonlinear equation in Banach spaces. A convergence theorem for existence and uniqueness of the solution has been established followed by its error bounds giving the benefit that the iterative method always converges to the solution. This local convergence analysis is applicable in solving such problems for which higher-order derivative based previous studies fail. Later, we relaxed these assumptions and derive convergence results under H\"{o}lder condition for solving different types of nonlinear integral equations, which are not solvable by the previous approach. 

Akanksha Saxena\\
Department of Mathematics\\
Maulana Azad National Institute of Technology\\
 Bhopal, M.P. India-462003.\\
Email: akanksha.sai121@gmail.com.\\\\
J. P. Jaiswal\\
Department of Mathematics\\
Guru Ghasidas Vishwavidyalaya ( A Central University)\\
Bilaspur, C.G. India-495009.\\
Email: asstprofjpmanit@gmail.com.\\\\
K. R. Pardasani\\
Department of Mathematics\\
Maulana Azad National Institute of Technology\\
 Bhopal, M.P. India-462003.\\
Email: kamalrajp@rediffmail.com.\\\\


\begin{thebibliography}{10}
\bibitem{Traub}
Traub, J.F., (1977),  Iterative Methods for the solution of equations, Chelsea Publishing Company, New York.
\bibitem{Kantorovich}
 Kantorovich, L.V., Akilov, G.P., (1982), Functional Analysis, Pergamon Press, Oxford.
\bibitem{Argyros2}
Argyros, I.K., George, S., (2016), Local convergence of two competing third order methods in Banach spaces, Appl. Math., 41, 341-350.\bibitem{Argyros1}
Argyros, I.K., Khattri, S.K., (2016), Local convergence for a family of third order methods in Banach spaces, Punjab Univ. J. Math., 46, 52-63.
\bibitem{Argyros3}
Argyros, I.K., Gonzalez, D., Khattri, S.K., (2016), Local convergence of a one parameter fourth-order Jarratt-type method in Banach spaces, Comment. Math. Univ. Carolin, 57, 289-300.
\bibitem{Cordero}
Cordero, A., Ezquerro, J.A., Hern\'andez, M.A., Torregrosa, J., (2015), On the local convergence of a fifth-order iterative method in Banach spaces, Appl. Math. Comput, 251, 396-403.
\bibitem{Gupta}
Martínez, E., Singh, S., Hueso, J. L., Gupta, D. K. (2016), Enlarging the convergence domain in local convergence studies for iterative methods in Banach spaces. Applied Mathematics and Computation, 281, 252-265.
\bibitem{Behl}
Behl, R., Motsa, S.S., (2015), Geometric construction of eighth-order optimal families of Ostrowski's method, Sci. World J., Article ID 614612, 11pages.
\bibitem{Sharma}
Sharma, J.R., Arora, H.,  (2016), A new family of optimal eighth order methods with dynamics for nonlinear equations, Appl. Math. Comput., 273, 924-933.
\bibitem{Rall}
Rall, L.B., (1979), Computational solution of nonlinear operator equations, Robert E Krieger, New York.
\bibitem{Janak}
Sharma, J.R., Gupta, P., (2014), An efficient fifth order method for solving systems of nonlinear equations, Comput. Math. Appl., 67(3), 591-601.
\bibitem{Bhavna}
Panday, B., Jaiswal, J. P. (2018), On the local convergence of modified Homeier-like method in Banach spaces. Numerical Analysis and Applications, 11(4), 332-345.
\bibitem{Xiao}
Xiao, X., Yin, H. (2015), A new class of methods with higher order of convergence for solving systems of nonlinear equations. Applied Mathematics and Computation, 264, 300-309.
\bibitem{Noor}
Noor, M. A., Waseem, M. (2009), Some iterative methods for solving a system of nonlinear equations. Computers Mathematics with Applications, 57(1), 101-106.
\bibitem{Parhi}
Sharma, D., Parhi, S. K. (2020),  Extending the applicability of a third-order scheme with Lipschitz and Hölder continuous derivative in Banach spaces. Journal of the Egyptian Mathematical Society, 28(1), 1-13.
\bibitem{Argyros5}
Argyros, I. K., George, S. (2017), Local convergence of a fifth convergence order method in Banach space. Arab Journal of Mathematical Sciences, 23, 205-214.
\bibitem{Argyros}
Argyros, I.K., Hilout, S., (2013), Computational methods in nonlinear analysis, World Scientific Publishing Company, New Jersey.
\bibitem{Amat}
Amat, S., Busquier, S., Plaza, S., (2010), Chaotic dynamics of a third- order Newton-type method, J. Math. Anal. Appl., 366, 24-32.
\bibitem{George}
Argyros, I. K., George, S. (2015),  Local convergence for some high convergence order Newton-like methods with frozen derivatives. SeMA Journal, 70(1), 47-59.
\bibitem{Argyros4}
Argyros, I. K., George, S. (2015),  Local convergence of deformed Halley method in Banach space under Holder continuity conditions. J. Nonlinear Sci. Appl, 8, 246-254.
\bibitem{Cordero1}
Cordero, A., Hueso, J. L., Martínez, E., Torregrosa, J. R. (2010). A family of iterative methods with sixth and seventh order convergence for nonlinear equations. Mathematical and Computer Modelling, 52(9-10), 1490-1496.
\bibitem{Liu}
Liu, T., Qin, X., Wang, P. (2019). Local convergence of a family of iterative methods with sixth and seventh order convergence under weak conditions. International Journal of Computational Methods, 16(08), 1850120.
\bibitem{Legaz}
Argyros, I. K., Legaz, M. J., Magreñán, Á. A., Moreno, D., Sicilia, J. A. (2019),  Extended local convergence for some inexact methods with applications. Journal of Mathematical Chemistry, 57(5), 1508-1523.
\bibitem{Jain}
Jain, D. (2013), Families of Newton-like methods with fourth-order convergence. International Journal of computer mathematics, 90(5), 1072-1082.\\
%
%
%
%
%
%
%
\end{thebibliography}
\end{document}